\documentclass[11pt,reqno]{amsart}
\usepackage[letterpaper,margin=1in]{geometry}
\usepackage[T1]{fontenc}
\usepackage{lmodern}
\usepackage{microtype}
\usepackage{amsmath,amssymb,mathtools}
\usepackage[colorlinks=true,linkcolor=blue,citecolor=blue,urlcolor=blue]{hyperref}
\hypersetup{pdftitle={Duflo-Serganova functor for chiral differential operators on classical invariant rings},
 pdfauthor={Tomoyuki Arakawa, Xuanzhong Dai, Bailin Song}}
\numberwithin{equation}{section}
\newtheorem{theorem}{Theorem}[section]

\newtheorem{corollary}[theorem]{Corollary}
\theoremstyle{remark}

\newcommand{\g}{\mathfrak g}
\newcommand{\osp}{\mathfrak{osp}}
\newcommand{\so}{\mathfrak{so}}
\newcommand{\spalg}{\mathfrak{sp}}
\newcommand{\DS}{\operatorname{DS}}
\newcommand{\sch}{\operatorname{sch}}
\newcommand{\ch}{\operatorname{ch}}
\newcommand{\tr}{\operatorname{tr}}
\newcommand{\rk}{\operatorname{rk}}
\newcommand{\A}{\mathcal A}
\newcommand{\V}{\mathbf V}
\newcommand{\SF}{\operatorname{SF}}
\newcommand{\Tot}{\operatorname{Tot}}

\title[DS functor for chiral differential operators]
{Duflo--Serganova functor for chiral differential\\
operators on classical invariant rings}
\author{Tomoyuki Arakawa}
\address[Tomoyuki Arakawa]{Okinawa Institute of Science and Technology,
\newline 1919-1 Tancha, Onna-son, Kunigami-gun, Okinawa 904-0495, Japan}
\email{tomoyuki.arakawa@oist.jp}
\author{Xuanzhong Dai}
\address[Xuanzhong Dai]{School of Mathematics, Nanjing University,
Nanjing 210093, China}
\email{xzdai@nju.edu.cn}
\author{Bailin Song}
\address[Bailin Song]{School of Mathematical Sciences,
University of Science and Technology of China,
Hefei, Anhui 230026, P. R. China}
\email{bailinso@ustc.edu.cn}
\date{}
\begin{document}
\begin{abstract}
We study the Duflo--Serganova functor for vertex algebras of chiral differential operators on classical invariant rings, introduced in \cite{ADS}. Our main result is a general cancellation theorem showing that a common bosonic and fermionic summand can be removed by Duflo--Serganova reduction. We apply this theorem to obtain Duflo--Serganova reduction isomorphisms for two families of simple orthosymplectic affine vertex superalgebras at levels  $-2n$ and $n$, which are not covered by the existing results of Gorelik and Serganova. As a consequence, we obtain supercharacter identities, including the conjecture of Adamović and Milas as a special case.
\end{abstract}
\maketitle

\section{Introduction and main results}
The Duflo--Serganova functor was introduced by Duflo and Serganova
in 2005 in their study of associated varieties of representations
of Lie superalgebras \cite{DS}. For a Lie superalgebra
$\mathfrak a$ and an odd element $x$ with $[x,x]=0$, it sends
an $\mathfrak a$-module $M$ to
\[
\DS_x(M)=\ker_M x/\operatorname{im}_M x,
\]
a module over
$\mathfrak a_x=\ker\operatorname{ad}x/\operatorname{im}\operatorname{ad}x$.
For graded modules with finite-dimensional homogeneous components and a degree-zero $x$, 
taking cohomology preserves the superdimension of each homogeneous component, and hence the homogeneous supercharacter is unchanged.

Gorelik and Serganova extended this construction to affine Lie
superalgebras and vertex superalgebras \cite{GS}.
Its compatibility with vertex algebra products follows from the
fact that every zero mode is a derivation. In particular, for any
conformal vertex superalgebra $V$ and odd weight-one element
$j\in V$ satisfying $j_{(0)}j=0$, the operator $j_{(0)}$ is
square-zero and one can define
\[
\DS_j(V)=\ker_V j_{(0)}/\operatorname{im}_V j_{(0)}.
\]
This is a vertex superalgebra when the vacuum class is nonzero,
and the same construction gives a functor from $V$-modules to
$\DS_j(V)$-modules. If $j_{(0)}$ annihilates the conformal vector,
as for a primary weight-one current, the conformal structure also
descends. Since $j_{(0)}$ preserves conformal weight, this reduction
preserves the homogeneous supercharacter when the weight spaces
are finite-dimensional.

In \cite{ADS}, the vertex algebras $\V_G(W_0,W_1)$ were
introduced as chiral differential operators on invariant rings by
relative BRST reduction of the $\beta\gamma$--$bc$ system
associated with two representations $W_0,W_1$ of a connected
reductive group $G$. Their geometric meaning is as follows.
The bosonic representation determines the invariant ring
$\mathcal O(W_0)^G$ and affine quotient
$\widetilde X=\operatorname{Spec}\mathcal O(W_0)^G$.
On a free closed-orbit locus $U\subset W_0$, put
$X=U/G$ and $E=(U\times W_1)/G$. The BRST sheaf on
$\widetilde X$ restricts to the sheaf of chiral differential
operators $\mathcal D_{\wedge E}^{\mathrm{ch}}$ 
\cite{GMS03}
on $X$;
under the geometric hypotheses of \cite{ADS}, its global sections
identify with $\V_G(W_0,W_1)$. The representation $W_1$
provides the fermionic fields needed for anomaly cancellation.

We study the DS functor for these chiral differential operator algebras. Our main result is a general cancellation theorem showing that a common bosonic and fermionic summand can be removed by DS reduction. This provides a mechanism for relating chiral differential operator algebras associated with different pairs of representations and, through their affine realizations, for producing DS reduction isomorphisms for affine vertex superalgebras.

Write
$\g=\operatorname{Lie}G$ and let $\kappa_\g$ be its Killing form.
The anomaly and trace conditions are
\begin{equation}\label{eq:anomaly}
\tr_{W_0}(\xi\eta)-\tr_{W_1}(\xi\eta)=\kappa_\g(\xi,\eta),
\qquad
\tr_{W_0}\xi=\tr_{W_1}\xi
\quad(\xi,\eta\in\g).
\end{equation}

\begin{theorem}[Cancellation of a common summand]\label{thm:cancellation}
Let $G$ be a connected complex reductive algebraic group and let
$W_0,W_1$ be finite-dimensional rational $G$-modules satisfying
\eqref{eq:anomaly}. Suppose that a nonempty $G$-stable open
subset $U\subset W_0$ has a free $G$-action and all its orbits
are closed in $W_0$.
Let $W_2$ be any finite-dimensional rational $G$-module and set
$W_0'=W_0\oplus W_2$, $W_1'=W_1\oplus W_2$.
For dual bases $\{t_i\},\{t^i\}$ of $W_2,W_2^*$, the odd current
\[
j=\sum_i:\beta^{t_i}c^{t^i}:
\]
descends to $\V_G(W_0',W_1')$ and defines a square-zero differential
$\delta=j_{(0)}$. There is an isomorphism
\begin{equation}\label{eq:cancellation}
H_\delta^p\bigl(\V_G(W_0',W_1')\bigr)
\cong
\begin{cases}
\V_G(W_0,W_1),&p=0,\\
0,&p\ne0.
\end{cases}
\end{equation}
The degree assigns $1,-1$ to the additional $c,b$ fields and
zero to the original matter fields. The isomorphism preserves
canonical conformal vectors and is equivariant for automorphisms
of $G$ and the three representations preserving the displayed
direct sums and the common identification of $W_2$.
No codimension or affinization hypothesis is required.
\end{theorem}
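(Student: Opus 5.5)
The plan is to realize $\delta$ at the level of the BRST complex that defines $\V_G(W_0',W_1')$, and to show that it kills exactly the $W_2$ free fields. Write $\mathcal C'=\mathcal S(W_0)\otimes\mathcal E(W_1)\otimes\mathcal Q(W_2)\otimes\Lambda_{\g}$. Here $\mathcal S,\mathcal E$ are the $\beta\gamma$ and $bc$ systems, $\Lambda_\g$ is the ghost system, and $\mathcal Q(W_2)=\mathcal S(W_2)\otimes\mathcal E(W_2)$ is the additional quartet. Let $d$ be the relative BRST differential, so that $\V_G(W_0',W_1')=H_d$ of the $G$-invariant, $b$-ghost-relative subcomplex.

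\textbf{Step 1: $j$ descends.} First I will check that $j=\sum_i:\beta^{t_i}c^{t^i}:$ is $G$-invariant, since it comes from the invariant pairing $W_2\otimes W_2^*\to\mathbb C$. Next I will check that $J^\xi_{(n)}j=0$ for all $n\ge0$, where $J^\xi$ is the total $\g$-current. For $n=0$ this is invariance. For $n=1$, a double pole would need two contractions. The $W_2$-part of $J^\xi$ is $:\gamma\xi\beta:+:c\xi b:$, and $j$ contains one $\beta$ and one $c$, so each summand of $J^\xi$ can contract only once with $j$. Hence there is no double pole, and $J^\xi_{(n)}j=0$ for $n\ge1$ as well.

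It follows that $j$ commutes with $d$ in the vertex-algebra sense and is annihilated by the $b$-ghost zero modes. So $\delta=j_{(0)}$ acts on the relative complex and commutes with $d$. Moreover $j_{(0)}j=0$, because $\beta\beta$ and $cc$ have no contractions and $j$ contains no $\gamma$ or $b$. Finally, $j$ is primary of weight one for the canonical conformal vector, which gives $\delta\omega=0$ and yields the statement about conformal vectors.

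\textbf{Step 2: acyclicity of the quartet.} On $\mathcal Q(W_2)$ the differential acts by $\gamma^{t}\mapsto \pm c^{t}$ and $b^{t^*}\mapsto\pm\beta^{t^*}$, and it kills $\beta$ and $c$. I will introduce the homotopy field $h=\sum_i :\partial\gamma^{t^i}b^{t_i}:$, or a variant with the indices and signs adjusted. Its zero mode satisfies $[\delta,h_{(0)}]=L_0^{W_2}$, where $L_0^{W_2}$ is the $W_2$-sector Virasoro zero mode for the grading with $\gamma,c$ of weight $0$ and $\beta,b$ of weight $1$. This $L_0^{W_2}$ is diagonalizable with nonnegative eigenvalues. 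Its kernel is the polynomial/exterior algebra in the zero modes $\gamma_0,c_0$, on which $\delta$ is the Koszul (de Rham) differential of $W_2\oplus\Pi W_2$. That complex is acyclic except for the constants. Hence $H_\delta(\mathcal Q(W_2))=\mathbb C\mathbf 1$, concentrated in degree $0$ for the grading that assigns $1,-1$ to $c,b$. Since $\delta$ acts only on the $\mathcal Q(W_2)$ factor, I conclude
\[
H_\delta(\mathcal C')=\mathcal S(W_0)\otimes\mathcal E(W_1)\otimes\Lambda_\g=\mathcal C .
\]
This is compatible with the $\g$-action and the ghosts, and there $d'$ restricts to the BRST differential $d$ for $(W_0,W_1)$. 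This works because the $W_2$-contributions to the $\g$-current cancel: the anomaly \eqref{eq:anomaly} holds for both pairs, as $W_2$ enters $W_0'$ and $W_1'$ symmetrically.

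\textbf{Step 3: double complex.} This is the step I expect to be the main obstacle. I will view the relative, $G$-invariant complex as a double complex with differentials $d$ and $\delta$. Both preserve conformal weight, and each weight space is finite-dimensional after the relative/invariant truncation together with the charge gradings. The key difficulty is showing that $h_{(0)}$ and $L_0^{W_2}$ commute with $G$ and with the relative condition, so that the contraction of Step 2 survives passage to $G$-invariants. I will argue this from the $G$-invariance of $h$ together with the reductivity of $G$, which makes taking invariants exact.

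Granting this, I will filter by $L_0^{W_2}$-eigenvalue, which is preserved by $d$ up to lower-order terms. Then $\delta$-acyclicity on the positive eigenspaces implies that the spectral sequence computing $H_\delta$ of the $d$-cohomology degenerates. This identifies $H^p_\delta(\V_G(W_0',W_1'))$ with $\V_G(W_0,W_1)$ for $p=0$ and with $0$ otherwise. Here I will use the hypothesis on $U$ only through the definition of $\V_G$ as BRST cohomology, not through any vanishing theorem, which is consistent with the claim that no codimension or affinization hypothesis is needed. Equivariance follows because $j$ and $h$ are built canonically from the identification of $W_2$ in both summands.
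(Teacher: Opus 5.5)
Your overall architecture matches the paper's: a double complex in $\delta$ and the BRST differential, acyclicity of the $W_2$ sector, and exactness of $\g$-invariants. However, Step~3 has a genuine gap, and it is exactly where you choose not to use the BRST vanishing theorem.

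\textbf{The gap in Step~3.} Finite-dimensionality of weight spaces only tells you that both spectral sequences converge to $H^\bullet(\Tot C',d'+\delta)$. Your Step~2, combined with exactness of invariants, collapses the $\delta$-first spectral sequence. This gives $H^n(\Tot C')\cong H^n_d\bigl(C(W_0,W_1)\bigr)$ for every $n$. But the quantity you want, $H^p_\delta\bigl(H^0_{d'}(C')\bigr)$, is only the row $q=0$ of the $E_2$ page $E_2^{p,q}=H^p_\delta\bigl(H^q_{d'}(C')\bigr)$ of the other spectral sequence. If $H^q_{d'}(C')\neq0$ for some $q\neq0$, higher differentials can enter or leave that row, and total-degree-$p$ classes can live in other rows. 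Then $H^p_\delta(\V_G(W_0',W_1'))$ is not determined by the total cohomology. Even when it is, the conclusion ``$0$ for $p\neq0$'' still needs $H^p_d(C(W_0,W_1))=0$ for $p\neq0$.

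The paper gets both inputs from \eqref{eq:vanishing}, applied to $(W_0,W_1)$ and to $(W_0',W_1')$. For the primed pair it first checks two things. The $W_2$-traces cancel in \eqref{eq:anomaly}. And $U'=U\times W_2$ is a free locus with closed orbits, since the orbit of $(u,w)$ is the graph of a regular map from the closed orbit $G\cdot u\cong G$ to $W_2$. This is where the hypothesis on $U$ is used. Contrary to your remark, it plays no role in defining $\V_G$ as $H^0$ of the relative complex.

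Your proposed substitute, filtering by $L_0^{W_2}$-eigenvalue, also fails. The coupling term $\sum_a(:J^{u_a}_{\mathrm{add}}C^{u^a}:)_{(0)}$ of $d'$ contains modes of $J_{\mathrm{add}}$ of both signs, paired with ghost modes. So it both raises and lowers $W_2$-weight and respects no such filtration.

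\textbf{Smaller points.} What you flag as the main obstacle is routine. Your homotopy is built from the invariant pairing, so it commutes with $G$ and with the ghost zero modes, and invariants are exact. A few slips should also be fixed:
\begin{itemize}
\item Since $\delta$ is a derivation of all products, $[\delta,h_{(0)}]=(\delta h)_{(0)}$ is the $L_{-1}$-mode of the $W_2$ Virasoro field $\delta h$. The grading operator is $(\delta h)_{(1)}=[\delta,h_{(1)}]$, so you should use $h_{(1)}$.
\item The induced differential on $H_\delta(C')\cong C(W_0,W_1)$ is the original one because $J^\xi_{\mathrm{add}}=-\delta\bigl(\sum_i:b^{\xi t_i}\gamma^{t^i}:\bigr)$ is $\delta$-exact. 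It is not a consequence of the anomaly equation.
\item $\delta\omega'=0$ alone does not show that the class of the canonical conformal vector maps to the original one. You need $L^{W_2}_{\mathrm{can}}$ to be $\delta$-exact. The paper exhibits an explicit primitive; it also follows from your acyclicity in positive $W_2$-weight, since $L^{W_2}_{\mathrm{can}}$ is a $\delta$-cocycle there.
\end{itemize}
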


The construction of $\V_G(W_0,W_1)$ and its canonical conformal
vector is recalled in Section~2, and the theorem is proved in
Section~3. The additional degree in the theorem refines the DS
cohomology; forgetting this degree gives
$\DS_j(\V_G(W_0',W_1'))\cong\V_G(W_0,W_1)$.

\smallskip

Theorem 1.1 is the basic mechanism underlying our applications. 
By combining it with affine realizations of the vertex algebras  $\V_G(W_0,W_1)$, 
we obtain DS reduction isomorphisms for two families of simple orthosymplectic affine vertex superalgebras. 
These families lie outside the scope of the existing DS reduction results of Gorelik and Serganova, as explained below.


Let $L_k(\mathfrak a)$ denote the simple affine vertex superalgebra
of $\mathfrak a$ at level $k$, with its Sugawara conformal vector.
For orthosymplectic algebras, we normalize the invariant form so
that long roots of the orthogonal factor have squared length two.
For $x\in\mathfrak a_{\bar1}$ with $[x,x]=0$, we identify $x$
with its weight-one current and write $\DS_x$ for the cohomology
of $x_{(0)}$. For $\mathfrak a=\osp_{a|2b}$ and $\rk x=r$,
\[
\mathfrak a_x\cong\osp_{a-2r|2b-2r}.
\]
Here rank is half the rank of $x$ on the natural module.

\begin{corollary}\label{thm:sp}
Let $n\geq1$ and $M,N\geq0$ satisfy $M-N=2n+2$.
For a self-commuting odd element $x\in\osp_{2M|2N}$ of rank $N$,
there is an isomorphism of conformal vertex algebras
\begin{equation}\label{eq:sp}
\DS_x\bigl(L_{-2n}(\osp_{2M|2N})\bigr)
\cong L_{-2n}(\so_{4n+4}).
\end{equation}
\end{corollary}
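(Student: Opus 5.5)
The plan is to realize both sides as chiral differential operator algebras $\V_G(W_0,W_1)$ with $G=Sp_{2n}$, and then apply Theorem~\ref{thm:cancellation} with $W_2=\mathbb C^{2n}\otimes\mathbb C^{N}$.

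Take $G=Sp_{2n}$ and $V=\mathbb C^{2n}$ its natural module. Set $W_0=V\otimes\mathbb C^{M}$ and $W_1=V\otimes\mathbb C^{N}$. The $\beta\gamma$ system on $W_0$ together with the $bc$ system on $W_1$ then carries a dual pair action of $\osp_{2M|2N}\times\spalg_{2n}$.

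\textbf{Step 1: anomaly condition.} The trace part of \eqref{eq:anomaly} is automatic, since $\spalg_{2n}$ is semisimple. For the quadratic part, $\tr_{W_0}-\tr_{W_1}=(M-N)\tr_V=(2n+2)\tr_V$. This equals the Killing form $\kappa_{\spalg_{2n}}=(2n+2)\tr_V$, so the condition holds.

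\textbf{Step 2: geometric hypothesis.} For $M\ge 2n+2>2n$, the locus $U\subset W_0=\operatorname{Hom}(\mathbb C^{M},V)$ of surjective maps whose induced form on $\mathbb C^{M*}$ has rank $2n$ is open. On it $Sp_{2n}$ acts freely with closed orbits, by first fundamental theorem considerations.

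\textbf{Step 3: affine realizations.} I would invoke (or prove) the affine realizations from \cite{ADS} type results:
\[
\V_G(W_0,W_1)\cong L_{-2n}(\osp_{2M|2N}),\qquad \V_G(W_0'',0)\cong L_{-2n}(\so_{2M-2N})=L_{-2n}(\so_{4n+4}),
\]
where $W_0''=V\otimes\mathbb C^{M-N}$. Both isomorphisms respect conformal vectors, with the canonical vector matching the Sugawara vector. I expect this step to be the main obstacle. It needs simplicity and the identification of the commutant or BRST cohomology with the simple quotient; this is where the level $-2n$ and the relation $M-N=2n+2$ enter. If it is not available earlier, I would argue via Howe duality for $Sp_{2n}$ invariants in the $\beta\gamma$--$bc$ system, together with a character comparison.

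\textbf{Step 4: cancellation.} Write $W_0=W_0''\oplus W_2$ and $W_1=0\oplus W_2$ with $W_2=V\otimes\mathbb C^N$. Theorem~\ref{thm:cancellation} then gives
\[
H_\delta\bigl(\V_G(W_0,W_1)\bigr)\cong \V_G(W_0'',0),
\]
and forgetting the degree this is $\DS_j$. The hypotheses of the theorem hold for $W_0''$ by Steps 1 and 2: the same anomaly computation applies, and $M-N>2n$ gives the free closed-orbit locus.

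\textbf{Step 5: identify $j$ with $x$.} The current $j=\sum:\beta^{t_i}c^{t^i}:$ is the image of an odd element $x\in\osp_{2M|2N}$. This $x$ pairs the last $N$ bosonic directions $\mathbb C^{N}\subset\mathbb C^{M}$ with the fermionic $\mathbb C^N$; it satisfies $[x,x]=0$ and has rank $N$. Since all self-commuting odd elements of a given rank are conjugate under the even group, and DS is equivariant, the result holds for every such $x$. This gives \eqref{eq:sp}.
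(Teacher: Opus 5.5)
Your proposal is correct and follows the paper's route: apply Theorem~\ref{thm:cancellation} to $G=Sp_{2n}$ with $W_0=V^{\oplus(2n+2)}$, $W_1=0$, $W_2=V^{\oplus N}$; transport through the identifications $\V^{Sp_{2n}}_{M,N}\cong L_{-2n}(\osp_{2M|2N})$ and $\V^{Sp_{2n}}_{2n+2,0}\cong L_{-2n}(\so_{4n+4})$ from \cite{ADS}; recognize $j$ as the current of a rank-$N$ self-commuting odd flavor element; and finish by conjugacy under the even group. The only difference is that you check the anomaly and free closed-orbit hypotheses by hand, whereas the paper takes them from \cite{ADS}; as in the paper, the affine identifications of your Step~3 are cited rather than reproved.
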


The second application concerns a family obtained as an orthogonal orbifold.
\begin{corollary}\label{thm:so}
Let $n\geq5$, $N\geq3$, and $M-N=n-2$.
For a self-commuting odd element $x\in\osp_{2N|2M}$ of rank
$0\leq r\leq N-3$, there is an isomorphism of conformal vertex
superalgebras
\begin{equation}\label{eq:so}
\DS_x\bigl(L_n(\osp_{2N|2M})\bigr)
\cong L_n(\osp_{2(N-r)|2(M-r)}).
\end{equation}
In particular, rank-$(N-3)$ reduction gives
$\DS_x(L_n(\osp_{2N|2M}))\cong L_n(\osp_{6|2n+2})$.
\end{corollary}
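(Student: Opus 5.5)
The plan is to realize both sides of \eqref{eq:so} as chiral differential operator algebras $\V_G(W_0,W_1)$ with the same bosonic and fermionic data up to a common summand, and then apply Theorem~\ref{thm:cancellation}. The natural choice is $G=O_n$, or more precisely its identity component $SO_n$ together with the orthogonal orbifold. Take $W_0=\mathbb C^n\otimes\mathbb C^{M}$ (bosonic, $\beta\gamma$ on $M$ copies of the vector representation) and $W_1=\mathbb C^n\otimes\mathbb C^{N}$ (fermionic). Then the anomaly condition \eqref{eq:anomaly} reads $(M-N)\tr_{\mathbb C^n}(\xi\eta)=\kappa_{\so_n}(\xi,\eta)=(n-2)\tr_{\mathbb C^n}(\xi\eta)$, which is exactly $M-N=n-2$. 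The trace condition is automatic, since $\so_n$ is semisimple.

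\textbf{Step 1: identify each side with an orthogonal orbifold.} By the dual-pair (Howe duality) description from \cite{ADS}, $\V_{O_n}(\mathbb C^n\otimes\mathbb C^M,\mathbb C^n\otimes\mathbb C^N)$ is generated by the $O_n$-invariant quadratics. These form an affine $\osp_{2N|2M}$ at level $n$, and its image should be the simple quotient $L_n(\osp_{2N|2M})$. I expect this is the affine realization the paper establishes in the section following Section~3. It requires the geometric hypotheses: a free closed-orbit locus $U$, which exists when $M\ge n$ (true since $M=N+n-2\ge n+1$), plus simplicity/generation results. The hypotheses $n\ge5$ and $N\ge3$ presumably enter here, and they are what force the restriction $r\le N-3$. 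Applying the same statement with $(M-r,N-r)$ in place of $(M,N)$, which still satisfies $N-r\ge3$ and has the same difference $n-2$, gives $L_n(\osp_{2(N-r)|2(M-r)})$.

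\textbf{Step 2: cancellation.} Write $W_0=W_0^{\flat}\oplus W_2$ and $W_1=W_1^{\flat}\oplus W_2$ with $W_2=\mathbb C^n\otimes\mathbb C^r$. Theorem~\ref{thm:cancellation} then gives $\DS_j\V_G(W_0,W_1)\cong\V_G(W_0^\flat,W_1^\flat)$ as conformal vertex algebras. Equivariance under automorphisms preserving the decomposition, including the $O_n/SO_n$ component group, lets me pass to the orbifold if $G=SO_n$ is used. This is needed because the theorem assumes $G$ connected, and the component-group action commutes with $\delta$. Since $\Z/2$ is finite, taking invariants is exact, so cohomology commutes with invariants.

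\textbf{Step 3: match currents and the DS element.} The current $j=\sum_i:\beta^{t_i}c^{t^i}:$ is $O_n$-invariant and quadratic, so it is an odd weight-one element of $\osp_{2N|2M}$. It pairs $r$ bosonic directions with $r$ fermionic ones, so it has rank $r$, and $[j,j]=0$ because $j_{(0)}j=0$. All self-commuting odd elements of rank $r$ are conjugate under the even group, so $\DS_x\cong\DS_j$ for any such $x$.

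\textbf{Step 4: check conformal vectors.} The canonical conformal vector of $\V_G$ must coincide with the Sugawara vector of $L_n(\osp)$. This is standard for dual pairs at non-critical level, since the coset is trivial when the commutant is all of $\V$.

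The main obstacle is Step 1 for the reduced pair: proving that the $O_n$-invariant part equals the simple affine quotient, rather than merely containing the image of the universal affine algebra. I would first try the argument presumably given for the realization theorem in \cite{ADS}. That is, show the invariants are strongly generated by quadratics via first fundamental theorem of invariant theory for $O_n$ in the free-field limit, then deduce simplicity from simplicity of the $\beta\gamma$–$bc$ system under a reductive group action (Kac–Radul/Dong–Li–Mason type).
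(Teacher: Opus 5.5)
Your plan is essentially the paper's proof: apply Theorem~\ref{thm:cancellation} to $G=SO_n$ with $W_0=V^{\oplus(M-r)}$, $W_1=V^{\oplus(N-r)}$, $W_2=V^{\oplus r}$, and pass to $\mathbb Z_2=O_n/SO_n$-invariants by equivariance. Then identify both ends with simple affine algebras via the realization $(\V^{SO_n}_{M,N})^{\mathbb Z_2}\cong L_n(\osp_{2N|2M})$ of \cite{ADS}, which is where $N-r\geq3$ enters, and conclude by conjugacy of rank-$r$ elements. The step you flag as the main obstacle is simply cited from \cite{ADS} in the paper, and your suggested fallback would not work as stated: $\V_{SO_n}$ is a relative BRST cohomology with the gauge $\so_n$ at critical level, not an orbifold of the free-field system, so Dong--Li--Mason-type simplicity of invariants does not apply (indeed the free-field $O_n$-orbifold contains the $\so_n$ Casimir, a nonzero weight-two vector commuting with all $\osp$ currents, so it is not $L_n(\osp_{2N|2M})$).
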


The DS reduction isomorphisms in Corollaries 1.2 and 1.3 are not covered by the existing results of Gorelik and Serganova.
Gorelik and Serganova established DS isomorphisms for simple affine
vacuum modules at integrable levels
\cite[Theorem 2.2 and Corollary 3.4.2]{GS}, and at principal
admissible levels under the hypotheses of
\cite[Theorem 4.4.2]{GS}. 
The integrable-level results concern maximal-rank reduction, while in the principal admissible case the residual finite-dimensional algebra is assumed to be simple and of zero defect.
Their integrability convention requires integrability over the
finite-dimensional even part and local nilpotence of the root
operators for affine roots of positive squared length. Thus it is
a partial integrability condition, with respect to a prescribed
affine even factor $\widehat{\mathfrak a^\#}$, rather than to an
arbitrary even factor; see \cite[p.~4]{GS} and
\cite[Section 5.2]{GSInt}.

In Corollary~\ref{thm:sp}, $\mathfrak a^\#=\so_{2M}$ has
induced level $-2n$, whereas the integrable affine factor is
$\widehat{\spalg}_{2N}$ at level $n$ when $N>0$.
In Corollary~\ref{thm:so}, $\mathfrak a^\#=\spalg_{2M}$ has
induced level $-n/2$, whereas $\widehat{\so}_{2N}$ acts
integrably at level $n$. The finite-dimensional even actions
are integrable in both cases, but the negative levels on
$\widehat{\mathfrak a^\#}$ exclude integrability in the
sense of \cite{GS}.

The principal admissible vacuum theorem does not apply either.
With the invariant form normalized on $\mathfrak a^\#$ as in
\cite{GS}, the level and dual Coxeter number are
$(k,h^\vee)=(-2n,4n+2)$ in Corollary~\ref{thm:sp} and
$(-n/2,n-1)$ in Corollary~\ref{thm:so}.
In each case the reduced numerator of $k+h^\vee$ is smaller
than $h^\vee$, violating the principal admissibility condition
in \cite[Section 4.3]{GS}. Moreover, for the nontrivial
reductions in Corollary~\ref{thm:so}, $r<N$ and the residual
algebra has positive defect $N-r\geq3$; thus the maximal-rank
and zero-defect hypotheses also fail.

The proof of the general cancellation theorem proceeds by reversing the order of the odd-current and relative BRST cohomologies. Its equivariance also yields the orthogonal orbifold statement.

Finally, the DS reduction isomorphisms yield supercharacter identities for these orthosymplectic affine vertex superalgebras.
Since DS reduction preserves homogeneous supercharacters, 
Corollaries \ref{thm:sp} and \ref{thm:so} imply the following identities. 
In particular, 
specializing the first family to $n=1$
recovers the Adamović--Milas conjecture.

For a conformal vertex superalgebra with finite-dimensional weight
spaces, write
\[
\sch V(q)=\operatorname{str}_V q^{L_0-c/24},
\qquad
\ch V(q)=\tr_V q^{L_0-c/24}.
\]
\begin{corollary}\label{cor:characters}
Under the respective hypotheses of Corollaries~\ref{thm:sp}
and~\ref{thm:so},
\begin{align}
\sch L_{-2n}(\osp_{2M|2N})(q)
 &=\ch L_{-2n}(\so_{4n+4})(q),\label{eq:char-sp}\\
\sch L_n(\osp_{2N|2M})(q)
 &=\sch L_n(\osp_{6|2n+2})(q).\label{eq:char-so}
\end{align}
In particular, for every $N\geq0$,
\begin{equation}\label{eq:AM}
\sch L_{-2}(\osp_{2N+8|2N})(q)
=\ch L_{-2}(\so_8)(q)
=\frac{q\,\frac{d}{dq}E_4(q)}{240\eta(q)^{10}},
\end{equation}
where $E_4(q)=1+240\sum_{j\geq1}\sigma_3(j)q^j$ and
$\eta(q)=q^{1/24}\prod_{j\geq1}(1-q^j)$.
\end{corollary}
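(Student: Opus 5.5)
The identities \eqref{eq:char-sp} and \eqref{eq:char-so} follow from Corollaries~\ref{thm:sp} and~\ref{thm:so} together with the invariance of homogeneous supercharacters under DS reduction, as recalled in the introduction. Consider $V=L_{-2n}(\osp_{2M|2N})$ or $V=L_n(\osp_{2N|2M})$ with its Sugawara vector. The weight-one current $x$ is primary, so $x_{(0)}$ commutes with $L_0$ and preserves each weight space $V_\Delta$.

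\emph{Step 1: finite weight spaces.} The weight spaces $V_\Delta$ are finite-dimensional. This holds because $V$ is a quotient of the vacuum module, and $L_0$ has spectrum in $\mathbb Z_{\ge0}$ with finite multiplicities once $k+h^\vee\neq0$. We check this: for $\osp_{2M|2N}$ the dual Coxeter number in our normalization is $2M-2N-2=4n+2$, and $-2n+4n+2\neq0$. In the second family it is $2(N-M)-2$ up to normalization, and $k+h^\vee\neq0$ is again immediate.

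\emph{Step 2: Euler characteristic.} Since $x_{(0)}^2=0$ and $x_{(0)}$ is odd, on each $V_\Delta$ we have $\operatorname{sdim}V_\Delta=\operatorname{sdim}H(V_\Delta,x_{(0)})$. The sdim of $\ker$ and $\operatorname{im}$ cancel in the parity-graded Euler characteristic.

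\emph{Step 3: conformal structure.} The isomorphisms in the corollaries are isomorphisms of conformal vertex (super)algebras. They therefore match $L_0$ and the central charge $c$, and the factor $q^{-c/24}$ agrees. In \eqref{eq:char-sp} the target $L_{-2n}(\so_{4n+4})$ is purely even, so its supercharacter is its character. The central charges should also agree directly, since $\operatorname{sdim}\osp_{2M|2N}=\dim\so_{2M-2N}$ and the value $h^\vee$ is shared.

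\emph{Step 4: the rank-$(N-3)$ case.} For \eqref{eq:char-so}, apply Corollary~\ref{thm:so} with $r=N-3$. The reduced algebra is $\osp_{6|2M-2N+6}=\osp_{6|2n+2}$.

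\emph{Step 5: the Adamović--Milas case.} For \eqref{eq:AM}, take $n=1$ in the first family, so that $M=N+4$. This gives $\sch L_{-2}(\osp_{2N+8|2N})=\ch L_{-2}(\so_8)$ for all $N\ge0$. For $N=0$ the statement is trivial; for $N\ge1$ it is the corollary.

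\emph{Step 6: the modular-form expression.} It remains to identify $\ch L_{-2}(\so_8)$ with $q\frac{d}{dq}E_4/(240\eta^{10})$. I would cite the known result: $L_{-2}(D_4)$ is in the Deligne series, and Arakawa--Kawasetsu showed that its character satisfies a second-order modular linear differential equation with this solution. This step is the only non-formal input.

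Alternatively, compute directly: $c=-14$, so $c/24=-7/12$. The expression $q E_4'/240=\sum_j j\sigma_3(j)q^j$ begins $q+18q^2+\dots$, and dividing by $\eta^{10}=q^{10/24}\prod_j(1-q^j)^{10}$ gives leading exponent $1-5/12=7/12$. This matches $q^{-c/24}$. The next coefficient is $18+10=28=\dim\so_8$, which matches the weight-one space.

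The main obstacle is this last identification. I would rely on the cited Arakawa--Kawasetsu theorem rather than reprove it.
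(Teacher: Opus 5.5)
Your proposal is correct and is essentially the paper's argument: the odd square-zero zero mode preserves the finite-dimensional weight spaces, so superdimensions, and hence homogeneous supercharacters, are unchanged; the conformal isomorphisms of Corollaries~\ref{thm:sp} and~\ref{thm:so} match $L_0$ and $c$; and the specializations $r=N-3$ and $n=1$, together with the character formula of \cite{AK}, finish the proof. The paper runs the grading argument with the canonical grading on the BRST side rather than the Sugawara grading, but the two agree under the identifications; your two-coefficient computation is only a consistency check, so the citation of \cite{AK} remains the essential input for the last equality.
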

Equation~\eqref{eq:AM} is the conjecture of Adamovi\'c and Milas
\cite[Conjecture 8.1]{AM}. The last expression is the known character
formula for $L_{-2}(\so_8)$ \cite{AK}.

\medskip
\noindent
\textbf{Acknowledgements}.
Tomoyuki Arakawa is partially supported by JSPS KAKENHI Grant Numbers
21H04993, 25K21659 and 26H019. 
Part of this paper was written during his stay at SLMath 
for the program
``Representation Theory Under the Influence of Quantum Field Theory".
He thanks the institute for its excellent working environment. Bailin Song is supported by NSFC No. 12671042.
\medskip

\noindent
\textbf{Declaration of AI use.}
During the preparation of this work, the authors used OpenAI’s ChatGPT (GPT-6) to assist with mathematical discussions, the development and exposition of proofs, literature searches, and the drafting and editing of the manuscript. The authors independently verified all mathematical arguments and references, revised the AI-assisted material, and take full responsibility for the content of the paper.

\section{Relative BRST reduction}
We use the hypotheses and notation of Theorem~\ref{thm:cancellation}.
The conditions \eqref{eq:anomaly} and the assumption on $U$ are
the level and trace conditions and assumptions (A1)--(A2) of
\cite{ADS}.

Set $\A(W_0,W_1)=\mathcal S(W_0)\otimes\mathcal E(W_1)$.
For dual bases $\{v_i\},\{v^i\}$ of $W_0,W_0^*$ and
$\{w_s\},\{w^s\}$ of $W_1,W_1^*$, the even fields
$\beta^{v_i},\gamma^{v^i}$ and odd fields $b^{w_s},c^{w^s}$ have
singular OPEs
\[
\beta^{v_i}(z)\gamma^{v^j}(w)\sim\frac{\delta_{ij}}{z-w},
\qquad
b^{w_s}(z)c^{w^t}(w)\sim\frac{\delta_{st}}{z-w}.
\]
The gauge currents
\[
J^\xi=-\sum_i:\beta^{\xi v_i}\gamma^{v^i}:
+\sum_s:b^{\xi w_s}c^{w^s}:
\]
have level $-\kappa_\g$ by \eqref{eq:anomaly}.
The relative BRST complex and its degree-zero cohomology are
\begin{equation}\label{eq:relative}
C^q(W_0,W_1)
=\bigl(\A(W_0,W_1)\otimes\SF^q(\g)\bigr)^{\g},
\qquad
\V_G(W_0,W_1)=H^0(C(W_0,W_1),D).
\end{equation}
Here $\SF(\g)$ is generated by the antighosts $B^u$ and the
fields $\partial C^\alpha$, of ghost degrees $-1,1$.
For dual bases $\{u_a\}\subset\g$, $\{u^a\}\subset\g^*$,
the differential is $D=Q_{(0)}$, where
\[
Q=\sum_a:J^{u_a}C^{u^a}:
-\frac12\sum_{a,b}:C^{u^a}C^{u^b}B^{[u_a,u_b]}:.
\]
The relative condition removes the ghost zero modes.

The canonical conformal vector is
\begin{equation}\label{eq:canonical}
L_{\mathrm{can}}=
\sum_i L_{\beta^{v_i},\gamma^{v^i}}
+\sum_s L_{b^{w_s},c^{w^s}}+L_{\mathrm{gh}},
\end{equation}
where
\[
L_{\beta,\gamma}=:\beta\partial\gamma:
-\frac12\partial(:\beta\gamma:),\quad
L_{b,c}=-:b\partial c:+\frac12\partial(:bc:),\quad
L_{\mathrm{gh}}=-\sum_a:B^{u_a}\partial C^{u^a}:.
\]
All matter generators have weight $1/2$, and the generators
$B,\partial C$ of $\SF(\g)$ have weight one.
Thus every canonical weight space of the relative complex is
finite-dimensional. By the BRST vanishing theorem of \cite{ADS},
\begin{equation}\label{eq:vanishing}
H^q(C(W_0,W_1),D)=0\qquad(q\ne0).
\end{equation}

\section{Proof of the cancellation theorem}
First, the hypotheses giving \eqref{eq:vanishing} hold for
$(W_0',W_1')$ as well. The trace contributions of $W_2$ cancel in
\eqref{eq:anomaly}. On $U'=U\times W_2$ the action is free.
For $(u,w)\in U'$, its orbit is the graph of a regular map from
the closed orbit $G\cdot u\cong G$ to $W_2$, and is therefore
closed in $W_0'$. Hence, writing $C'=C(W_0',W_1')$,
\begin{equation}\label{eq:vanishing-prime}
H_D^q(C')\cong\delta_{q,0}\V_G(W_0',W_1').
\end{equation}

The current $j$ is independent of the chosen dual bases and has
regular OPEs with itself and with all gauge currents.
It is therefore a relative BRST cocycle and descends to
$\V_G(W_0',W_1')$.
On the additional matter fields its zero mode acts by
\begin{equation}\label{eq:pairs}
\delta\gamma^{t^i}=c^{t^i},\qquad
\delta b^{t_i}=\beta^{t_i},\qquad
\delta\beta^{t_i}=\delta c^{t^i}=0,
\end{equation}
and it annihilates the original matter.
In particular, $\delta^2=0$. The additional gauge current is exact:
\[
J_{\mathrm{add}}^\xi
=-\delta\left(\sum_i:b^{\xi t_i}\gamma^{t^i}:\right).
\]
Extend $\delta$ by zero on the ghosts. It preserves the relative
complex and satisfies $D\delta+\delta D=0$.
Moreover, $D$ preserves the additional matter degree in
Theorem~\ref{thm:cancellation}. We obtain a double complex with
$\delta,D$ of bidegrees $(1,0),(0,1)$.

As differential vertex superalgebras,
\[
\A(W_0',W_1')=
\A(W_0,W_1)\otimes
\bigl(\mathcal S(W_2)\otimes\mathcal E(W_2)\bigr),
\]
with zero differential on the first factor.
In the second factor all creation modes form the contractible
pairs in \eqref{eq:pairs}, together with their derivatives.
More explicitly, on their PBW symmetric superalgebra, extend
$c\mapsto\gamma$, $\beta\mapsto b$ as an odd superderivation.
Its anticommutator with $\delta$ is total PBW degree, so every
positive-degree vector is contractible. Thus
\begin{equation}\label{eq:matter-cohom}
H_\delta^p\bigl(\A(W_0',W_1')\bigr)
\cong\delta_{p,0}\A(W_0,W_1).
\end{equation}
Since the representations are rational and $G$ is reductive,
taking $\g$-invariants is exact on the modules at hand. Therefore
\[
H_\delta^p(C'^\bullet)
\cong\delta_{p,0}C^\bullet(W_0,W_1).
\]
The induced differential is the original BRST differential:
the additional gauge currents have zero class, and the ghost
term is unchanged.

Both differentials preserve canonical conformal weight.
At each conformal weight, the double complex is finite-dimensional; 
hence both cohomological degrees are bounded and both spectral sequences converge
to $H^\bullet(\Tot C',D+\delta)$.
Taking $D$-cohomology first
and using
\eqref{eq:vanishing-prime},
we identify this with
$H_\delta^\bullet(\V_G(W_0',W_1'))$.
Taking $\delta$-cohomology first and then using
\eqref{eq:vanishing} identifies it with $\V_G(W_0,W_1)$,
concentrated in degree zero. This proves \eqref{eq:cancellation}.
The spectral sequences respect all vertex algebra products.
BRST cocycles formed from the original matter and no ghosts
are unchanged by the identifications; in particular, the
isomorphism intertwines any residual affine currents of this form.

For the contribution of the additional matter to
\eqref{eq:canonical}, one has
\[
L_{\mathrm{can}}^{W_2}
=\delta\left(
\sum_i:b^{t_i}\partial\gamma^{t^i}:
-\frac12\partial\sum_i:b^{t_i}\gamma^{t^i}:
\right).
\]
Consequently the canonical conformal vector maps to that of
the original complex. Finally, the pairing defining $j$ and
the entire construction are equivariant under the compatible
automorphisms in the statement; on ghosts they act through
the induced automorphisms of $\g$ and $\g^*$.
This proves Theorem~\ref{thm:cancellation}.

In particular, if a finite group $\Gamma$ of such automorphisms
acts on the data, exactness of invariants gives
\begin{equation}\label{eq:equivariance}
H_\delta^\bullet\bigl(\V_G(W_0',W_1')^\Gamma\bigr)
\cong\V_G(W_0,W_1)^\Gamma.
\end{equation}

\section{DS reductions and supercharacters}
We recall the affine identifications from \cite{ADS}.
For $G=Sp_{2n}$ or $SO_n$ let $V$ be the standard representation
and abbreviate
$\V^G_{M,N}=\V_G(V^{\oplus M},V^{\oplus N})$.
The required geometric ranges and identifications are
\begin{equation}\label{eq:ranges}
\begin{array}{c|c}
G&\text{parameters}\\ \hline
Sp_{2n}&n\geq1,\quad M-N=2n+2,\quad N\geq0,\\
SO_n&n\geq5,\quad M-N=n-2,\quad N\geq3,
\end{array}
\end{equation}
and
\begin{align}
\V^{Sp_{2n}}_{M,N}&\cong L_{-2n}(\osp_{2M|2N}),\label{eq:id-sp}\\
(\V^{SO_n}_{M,N})^{\mathbb Z_2}
&\cong L_n(\osp_{2N|2M}),\qquad
\mathbb Z_2=O_n/SO_n.\label{eq:id-so}
\end{align}
Here the free locus consists of full-rank matrices.
The identifications preserve canonical and Sugawara conformal
vectors; their central charges are respectively
$-8n^2-6n$ and $-n(2n-3)$.
For \eqref{eq:id-so}, $O_n$ acts on matter through its standard
representation and on ghosts through its adjoint representation
and its dual. On BRST cohomology this action factors through
$O_n/SO_n$. 
We use the geometric hypotheses and affine identifications established in \cite[Section 6]{ADS}, 
but not the supercharacter formulas obtained there.

\begin{proof}[Proof of Corollary~\ref{thm:sp}]
Apply Theorem~\ref{thm:cancellation} to $G=Sp_{2n}$ with
$W_0=V^{\oplus(2n+2)}$, $W_1=0$, and $W_2=V^{\oplus N}$.
Numbering the additional columns last, the odd current is
\begin{equation}\label{eq:delta}
j_N=\sum_{s=1}^N\sum_{a=1}^{2n}
:\beta^{a,2n+2+s}c^{as}:.
\end{equation}
Under \eqref{eq:id-sp}, it is the current of a rank-$N$
self-commuting odd flavor element: its action on the natural
flavor module has $N$ pairs as in \eqref{eq:pairs} and
cohomology of dimension $(2M-2N\mid0)$.
The residual flavor algebra is $\so_{2(M-N)}$, and the
retained currents give the corresponding affine action.
The theorem and \eqref{eq:id-sp} give
\[
\DS_x(\V^{Sp_{2n}}_{M,N})
\cong\V^{Sp_{2n}}_{2n+2,0}
\cong L_{-2n}(\so_{4n+4})
\]
as conformal vertex algebras. Conjugacy of rank-$N$
self-commuting odd elements under the even flavor group gives
the result for the stated $x$; see \cite{DS}.
\end{proof}

\begin{proof}[Proof of Corollary~\ref{thm:so}]
For $G=SO_n$, take $W_0=V^{\oplus(M-r)}$,
$W_1=V^{\oplus(N-r)}$, and $W_2=V^{\oplus r}$.
The original pair is in \eqref{eq:ranges} because $N-r\geq3$.
The current
\[
j_r=\sum_{s=1}^r\sum_{a=1}^n
:\beta^{a,M-r+s}c^{a,N-r+s}:
\]
is $O_n$-invariant and represents a rank-$r$ self-commuting
odd element of $\osp_{2N|2M}$. The orthogonal realization
uses the parity reversal of the flavor space of free fields.
The residual flavor algebra is
$\osp_{2(N-r)|2(M-r)}$.
Theorem~\ref{thm:cancellation} and \eqref{eq:equivariance} give
\[
\DS_x\bigl((\V^{SO_n}_{M,N})^{\mathbb Z_2}\bigr)
\cong(\V^{SO_n}_{M-r,N-r})^{\mathbb Z_2}.
\]
Equation~\eqref{eq:id-so} proves \eqref{eq:so}, preserving the
Sugawara vectors. Conjugacy gives the statement for any $x$
of rank $r$.
\end{proof}

\begin{proof}[Proof of Corollary~\ref{cor:characters}]
The odd differential $\delta$ commutes with the canonical grading,
and every weight space is finite-dimensional. Its cohomology
therefore has the same superdimension as the original space in
each weight. The conformal isomorphisms in
Corollaries~\ref{thm:sp} and~\ref{thm:so} preserve central charge,
so the normalized homogeneous supercharacters agree.
Taking $r=N-3$ gives \eqref{eq:char-so}, and taking $n=1$ in
\eqref{eq:char-sp}, together with the $D_4$ character formula
of \cite{AK}, gives \eqref{eq:AM}.
\end{proof}


\begin{thebibliography}{99}
\bibitem{AM}
D. Adamovi\'c and A. Milas,
On some vertex algebras related to $V_{-1}(\mathfrak{sl}(n))$
and their characters,
Transform. Groups \textbf{26} (2021), no. 1, 1--30.

\bibitem{ADS}
T. Arakawa, X. Dai, and B. Song,
Faithful Localization of Vertex Algebra Modules,
preprint.

\bibitem{AK}
T. Arakawa and K. Kawasetsu,
Quasi-lisse vertex algebras and modular linear differential equations,
in \emph{Lie Groups, Geometry, and Representation Theory:
A Tribute to the Life and Work of Bertram Kostant},
Progr. Math. 326, Birkh\"auser, Cham, 2018, 41--57.

\bibitem{GMS03} V. Gorbounov, F. Malikov, and V. Schechtman, Gerbes of chiral differential operators. III, in The Orbit Method in Geometry and Physics, Progr. Math. 213, Birkh\"auser, Boston, 2003, 73--100.

\bibitem{DS}
M. Duflo and V. Serganova,
On associated variety for Lie superalgebras,
\href{https://arxiv.org/abs/math/0507198}{arXiv:math/0507198} (2005).

\bibitem{GS}
M. Gorelik and V. Serganova,
On DS functor for affine Lie superalgebras,
RIMS K\^oky\^uroku \textbf{2075} (2018), 127--146;
\href{https://arxiv.org/abs/1711.10149}{arXiv:1711.10149}.

\bibitem{GSInt}
M. Gorelik and V. Serganova,
Integrable modules over affine Lie superalgebras
$\mathfrak{sl}(1|n)^{(1)}$,
Commun. Math. Phys. \textbf{364} (2018), 635--654.
\end{thebibliography}
\end{document}